\DeclareMathOperator{\Spec}{Spec}
\DeclareMathOperator{\sing}{sing}
\DeclareMathOperator{\Coh}{Coh}
\def\A{\mathbb{A}}
\def\P{\mathbb{P}}
\def\m{\mathfrak{m}}
\def\onto{\twoheadrightarrow}
\def\into{\hookrightarrow}
\def\xto{\xrightarrow}
\def\H{\mathcal{H}}
\def\I{\mathcal{I}}
\def\R{\mathcal{R}}
\theoremstyle{plain}
\newtheorem{theorem}{Theorem}[section]
\newtheorem{lemma}[theorem]{Lemma}
\newtheorem{corollary}[theorem]{Corollary}
\newtheorem{proposition}[theorem]{Proposition}
\theoremstyle{definition}
\theoremstyle{remark}
\newtheorem{remark}[theorem]{Remark}
\newtheorem{example}[theorem]{Example}
\begin{document}

\title{The Hilbert polynomial of a symbolic square}


\author{Kaloyan Slavov}

\begin{abstract}
Let $k$ be an algebraically closed field, and let $C\subset \P^n_k$ be a reduced closed subscheme with ideal sheaf $\I$. Let  $\I^{<2>}$ be the second symbolic power of $\I$. 
When $C$ is an integral curve, we compute the Hilbert polynomial of $\mathcal{O}_{\P^n}/\I^{<2>}$ in terms of invariants of $C$. 
\end{abstract}

\subjclass{Primary 14N99; Secondary 14H50, 14J17}.

\keywords{symbolic power, K\"ahler differentials, singular locus, Hilbert polynomial, curve, hypersurface.}

\maketitle

\section{Introduction}
\label{sec_introd}

Fix an algebraically closed field $k$.  
Let $i\colon C\into\P^n_k$ be a reduced closed subscheme, and let $\I$ be its
ideal sheaf. Let $S=k[x_0,\dots,x_n]$ with the usual grading. 

Following Lazarsfeld (see p.~177 in \cite{Laz1} or 
p.~164 in \cite{Laz2}), define
the second symbolic power $\I^{<2>}$ of $\I$ as the ideal sheaf consisting of germs of functions that vanish to order at least $2$ at every point of $C$.
When $C$ is integral, Corollary 1 in \cite{EH} implies that 
for any affine open $U=\Spec R\subset\P^n$, the ideal $\Gamma(U,\I^{<2>})\subset R$ coincides with the classical second symbolic power of 
$\Gamma(U,\I),$ as traditionally defined in commutative algebra 
--- i.e., it is the primary component of the ordinary second power of $\Gamma(U,\I)$.
See also the discussion in Section 3.9 in \cite{E}. 
When $C$ is a local complete intersection, we have 
$\I^{<2>}=\I^2$; however, this can fail for other subschemes $C$ (see Example \ref{non_lci} in Section \ref{undcondsubsection}).

The study of the symbolic powers of various closed subschemes has received recent interest; see, e.g., \cite{CM}, \cite{Linear_symb}, \cite{star}, \cite{generator_symb}. See \cite{KlU} for another instance when the second symbolic power of the ideal of a subscheme is of interest, in connection with the birational projection of a variety onto a hypersurface.

The motivation to investigate the Hilbert polynomial of $\mathcal{O}_{\P^n}/\I^{<2>}$ when $\dim C\geq 1$ came up as we were working on the problem of
identifying the dimension of the moduli space of hypersurfaces $V(F)$ of degree $l$ in $\P^n$ whose singular locus has dimension at least $b$, for a given $b\geq 1$; see \cite{S}. Namely, for a fixed reduced $C\into\P^n$ with ideal sheaf $\I,$ Lazarsfeld's definition of $\I^{<2>}$ directly implies that for $F\in S_l,$ we have $C\subset V(F)_{\sing}$ if and only if $F\in\Gamma(\P^n,\I^{<2>}(l)).$
In \cite{S}, we give an upper bound for 
$\dim \Gamma(\P^n,\I^{<2>}(l))$ that is valid for all $l$.  

Our first task now is to describe the sheaf $\I^{<2>}$ in a more explicit manner. 
Namely, in Section \ref{undcondsubsection}, we recall that
$\I^{<2>}$ fits into the exact sequence
\[
0\to\I^{<2>}\to\I\to\Omega_{\P^n}/\I\Omega_{\P^n}\to i_*\Omega_C\to 0.
\]

In Section \ref{hilbpolyJsubsect}, we turn to the natural question of computing the Hilbert polynomial of 
$\mathcal{O}_{\P^n}/\I^{<2>}$. 
Let $P_C(l)$ be the Hilbert polynomial of $C$.
We prove the following

\begin{proposition} The Hilbert polynomial of $\mathcal{O}_{\P^n}/\I^{<2>}$ is given by
\[\dim_k\Gamma(\P^n,(\mathcal{O}_{\P^n}/\I^{<2>})(l))=(n+1)P_C(l-1)-\dim_k\Gamma(C,\Omega_C(l))\]
for $l\gg 0.$
\label{Hilb_poly_O_mod_J}
\end{proposition}

The goal of Section \ref{section_curve} is to refine Proposition \ref{Hilb_poly_O_mod_J} in the case when $C$ is an integral curve. 
For this, we need to investigate the term $\Gamma(C,\Omega_C(l))$ in more detail. 

Let $C$ be an integral curve over $k$, and let $p\colon \widetilde{C}\to C$ be its normalization.
Let $\Psi\colon \Omega_C\to p_*\Omega_{\widetilde{C}}$ be the canonical map, and let
$\R_1,\R_2$ denote its kernel and cokernel:
\[0\to\R_1\to\Omega_C\xto{\Psi}p_*\Omega_{\widetilde{C}}\to\R_2\to 0.\]
Since $p$ is an isomorphism over a dense open subset of $C,$ so is $\Psi,$ and hence $\R_1$ and $\R_2$ have finite support,
contained in $C_{\sing}$. 
In particular, $\R_1$ and $\R_2$ are torsion sheaves. Note that the torsion subsheaf of $\Omega_C$ is contained in $\R_1,$ since 
$p_*\Omega_{\widetilde{C}}$ is locally free, as a $p_*\mathcal{O}_{\widetilde{C}}$-module; therefore, 
$\R_1$ is the torsion subsheaf of $\Omega_C$. See the proof of Proposition 2.2 in \cite{EKl} for its rich history. 
 For each $P\in C_{\sing},$ the stalks $(\R_1)_P$ and $(\R_2)_P$ are finite-dimensional $k$-vector spaces, so we can define the
 invariant
\[\mu(C):=\sum_{P\in C_{\sing}}(\dim_k (\R_1)_P-\dim_k(\R_2)_P).\]

For an integral curve $i\colon C\into\P^n$ with ideal sheaf $\I$ and saturated ideal $I,$
we let $d$ be its degree and $p_a$ be its arithmetic genus, so for large $l,$ we have
\begin{equation}
P_C(l)=\dim_k(S/I)_l=dl+1-p_a.
\label{Hilb_poly_curve}
\end{equation}

Notation as in \cite{EKl}, let $p_g$ denote the geometric genus of $C$, i.e., the genus of
$\widetilde{C}$. We prove

\begin{proposition}
For $l\gg 0,$
\[\dim_k\Gamma(\P^n, (\mathcal{O}_{\P^n}/\I^{<2>})(l)     )=ndl+1+(n+1)(1-d-p_a)-p_g-\mu(C).\]
\label{compHpolyprop}
\end{proposition}

When $C$ is an integral curve that is a local complete intersection, Proposition 2.2 in \cite{EKl} implies that 
$\chi(\Omega_C)=\chi(\omega_C),$ where $\omega_C$ is the dualizing sheaf of $C$. As a consequence,
\[\mu(C)=\chi(\Omega_C)-\chi(\Omega_{\widetilde{C}})=\chi(\omega_C)-\chi(\Omega_{\widetilde{C}})=p_a-p_g.\]
We recover this result in the particular case when $C$ is a plane curve, by an elementary argument. Namely,
we compute the codimension of $\Gamma(\P^n,\I^{<2>}(l))$ in $S_l$ directly and compare with the formula in Proposition
\ref{compHpolyprop}. As an easy corollary, we deduce:

\begin{corollary}
For an integral plane curve $C\into\P^2\subset\P^n,$ the Hilbert
polynomial of the sheaf $\Omega_C$ of K\"ahler differentials is
\[\chi(\Omega_C(l))=dl+p_a-1.\]
\label{kahler_plane_curves}
\end{corollary} 

Note that $C$ is not required to be smooth. 

Finally, in Section \ref{eg_section}, we compute explicitly the invariant $\mu(C)$ in the following 

\begin{example}
Consider the ideal 
\[I=(y^2-xz,yz-x^3,z^2-x^2y)\subset A=k[x,y,z],\] 
and let $C^0=V(I)\subset\A^3$; this is 
the curve parametrized by \[x=t^3, y=t^4, z=t^5.\] The 
projective closure $C\subset\P^3$ of $C^0$ has 
$d=5,$ $p_a=2$, and $p_g=0.$ 
The unique singular point of $C$ is $(0,0,0)\in C^0$.
\label{eg_intro}
\end{example}

In Section \ref{eg_section}, 
we compute that $\mu(C)=3,$ so the Hilbert polynomial of $\mathcal{O}_{\P^3}/\I^{<2>}$ is $15l-26$.

\section{An exact sequence for $\I^{<2>}$}
\label{undcondsubsection}

We investigate the sheaf $\I^{<2>}$. Proposition \ref{exact_seq_symb_power} below is a particular case of Exercise 8 on p.~83 in \cite{Kunz}. We include its proof for completeness.  

\begin{proposition} 
\begin{itemize}
\item[a)] Let $I\subset A=k[x_1,\dots,x_n]$ be a radical ideal. Then 
\[I^{<2>}=\text{Ker}\left(I\to\Omega_A/I\Omega_A\right).\]
\item[b)] Let $i:C\into\P^n$ be a reduced closed subscheme with ideal sheaf $\I.$ There is an exact sequence
\begin{equation}
0\to\I^{<2>}\to\I\to\Omega_{\P^n}/\I\Omega_{\P^n}\to i_*\Omega_C\to 0.
\label{eqn_exact_seq_for_symb_power}
\end{equation}
\end{itemize}
\label{exact_seq_symb_power}
\end{proposition}

\begin{proof}
It suffices to prove part a). 
Suppose that $f\in I$ satisfies $f\in\m^2$ for all $\m\supset I.$
 We claim that $df\in I\Omega_{A/k},$
where $d\colon A\to\Omega_{A/k}$ is the canonical derivation. (In this way, we linearize the a-priori inconvenient condition that $f\in\m^2$).
 We know that for each maximal $\m\supset I$, we have
$\Omega_{A/k}/\m\Omega_{A/k}=\Omega_{A/k}\otimes_A A/\m\simeq\m/\m^2$ as $A/\m$-vector spaces, and
\[\xymatrix{
I/I^2\ar[r]\ar[d]_{d} & \m/\m^2\ar[d]_{\simeq}\\
\Omega_{A/k}/ I\Omega_{A/k}\ar[r] & \Omega_{A/k}/\m\Omega_{A/k}
}
\]
commutes, so the condition $f\in\m^2$ is equivalent to $df\in \m\Omega_{A/k}.$

Since $\Omega_{A/k}$ is a {\it free} $A$-module, we conclude that
\[df\in \bigcap_{\m\supset I} (\m\Omega_{A/k})=\left(\bigcap_{\m\supset I} \m\right)\Omega_{A/k}=I\Omega_{A/k}.\]

The converse is obvious from the commutative diagram above.
\end{proof}

\begin{remark} Notation as in Example \ref{eg_intro},
consider
\[f=xy(y^2-xz)-x^2(yz-x^3)+z(z^2-x^2y)\in I.\] An easy check shows that $df=0$ in $\Omega_A/I\Omega_A$, so $f\in I^{<2>}.$ However, every element of $I^2$ contains only monomials of degree $4$ or larger, and since $f$ contains the monomial $z^3$ of degree $3$, we deduce that
$f\notin I^2.$ Thus, $I^2$ is strictly smaller than 
$I^{<2>}$ in this example. 
\label{non_lci} 
\end{remark}

\begin{corollary}
Suppose that $C\subset\P^n$ is a reduced closed subscheme, which is a local complete intersection. Let $\I$ be the ideal sheaf of $C$. 
Then for $F\in S_l$ we have $C\subset V(F)_{\sing}$ if and only if $F\in \Gamma(\P^n,\I^2(l)).$
\end{corollary}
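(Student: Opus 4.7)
The plan is to apply Proposition \ref{unrvlcnd} and reduce the corollary to the single assertion that $\J = \I^2$ when $C$ is a local complete intersection. Once that equality is established, the degree $l$ part of $\widetilde{\Gamma}(\J)$ is exactly $\Gamma(\P^n, \I^2(l))$, and Proposition \ref{unrvlcnd} gives the desired equivalence.

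To prove $\J = \I^2$, I would unwind the construction preceding Proposition \ref{unrvlcnd}. The map $\I \to i_*\H$ whose kernel defines $\J$ factors as
\[
\I \twoheadrightarrow \I/\I^2 = i_* i^*\I \twoheadrightarrow i_*\H.
\]
The first surjection has kernel $\I^2$, and the second, being $i_*$ applied to $i^*\I \twoheadrightarrow \H$, has kernel $i_*\G$ by the definition of $\G$ and the exactness of $i_*$. Hence $\I^2 \subset \J$ and $\J/\I^2 \cong i_*\G$, so it suffices to show $\G = 0$.

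The vanishing of $\G$ is local on $C$, so the problem reduces to the affine statement that if $I \subset A$ is locally generated by a regular sequence, then the conormal map $I/I^2 \to \Omega_{A/k} \otimes_A A/I$ is injective. This is a standard fact about regular embeddings (Exercise 16.17 in \cite{E}, already cited in the remark after the definition of $\J$), and it is really the only nontrivial input into the corollary.

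The main obstacle, such as it is, lies in that cited fact about the left-exactness of the conormal sequence for a local complete intersection; I would simply invoke it. The remainder of the proof is purely diagrammatic manipulation of the defining exact sequences for $\G$, $\H$, and $\J$, plus the translation from sheaves to graded modules via $\widetilde{\Gamma}$.
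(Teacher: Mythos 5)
Your proposal is correct and follows exactly the paper's route: the paper's proof simply invokes the earlier observation that $\J=\I^2$ for a local complete intersection because $\G=0$ (Exercise 16.17 in \cite{E}), which is precisely the reduction you carry out in more detail. Your identification $\J/\I^2\cong i_*\G$ is a valid and slightly more explicit unwinding of that same observation.
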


\begin{proof}
 The condition $C\subset V(F)_{\sing}$ is equivalent to
$F\in\Gamma(\I^{<2>}(l)).$
The inclusion $\I^2\subset\I^{<2>}$ is an equality if $C$ is a local complete intersection,
since in this case, the map $i^*\I\to i^*\Omega_{\P^n}$ is injective (e.g.~Exercise 16.17 in \cite{E}).
\end{proof}

\section{The Hilbert polynomial of $\mathcal{O}_{\P^n}/\I^{<2>}$}
\label{hilbpolyJsubsect}

Let $i:C\into\P^n$ be a reduced closed subscheme with ideal sheaf $\I$.

\begin{proof}[Proof of Proposition \ref{Hilb_poly_O_mod_J}.]
Define $\H\in \Coh(C)$ by the exactness of
\begin{equation}
0\to\H\to i^*\Omega_{\P^n}\to\Omega_{C}\to 0,
\label{defofH}
\end{equation}
so (\ref{eqn_exact_seq_for_symb_power}) yields a short exact sequence
\begin{equation}
0\to\I^{<2>}\to \I\to i_*\H\to 0.
\notag
\end{equation}

 From the short exact  sequence
\[0\to i_*\H\to \mathcal{O}_{\P^n}/\I^{<2>}\to\mathcal{O}_{\P^n}/\I\to 0,\]
we obtain a short exact sequence
\[0\to{\Gamma}(\P^n,i_*\H(l))\to{\Gamma}(\P^n,(\mathcal{O}_{\P^n}/\I^{<2>})(l))\to
{\Gamma}(\P^n,(\mathcal{O}_{\P^n}/\I)(l))\to 0\]
for large $l$. The last term has dimension
$P_C(l)$ for large $l,$ so it
suffices to compute the dimension of the first term.

The short exact sequence
\[0\to i_*\H\to \Omega_{\P^n}/\I\Omega_{\P^n}\to i_*\Omega_C\to 0\]
gives rise to a short exact sequence
\[0\to {\Gamma}(\P^n,i_*\H(l))\to{\Gamma}(\P^n,(\Omega_{\P^n}/\I\Omega_{\P^n})(l))\to\Gamma(C,\Omega_{C}(l))\to 0\]
for $l$ sufficiently large. 

Finally, we have to compute 
$\text{dim}_k{\Gamma}(\P^n,(\Omega_{\P^n}/\I\Omega_{\P^n})(l))$ for large $l$. Recall
(e.g.~Theorem II.8.13 in \cite{H}) the
short exact sequence
\begin{equation}
0\to\Omega_{\P^n}\to \mathcal{O}_{\P^n}(-1)^{\oplus (n+1)}\to \mathcal{O}_{\P^n}\to 0.
\label{Euler}
\end{equation}
Since $\mathcal{O}_{\P^n}$ is locally free, applying $-\otimes_{\mathcal{O}_{\P^n}}\mathcal{O}_{\P^n}/\I$ to this short exact sequence yields
a short exact sequence
\[0\to\frac{\Omega_{\P^n}}{\I\Omega_{\P^n}}\to \left(\frac{\mathcal{O}_{\P^n}(-1)}{\I\mathcal{O}_{\P^n}(-1)}\right)^{\oplus (n+1)}\to
\frac{\mathcal{O}_{\P^n}}{\I}\to 0.\]
The sequence
\begin{equation}
0\to{\Gamma}\left(\frac{\Omega_{\P^n}}{\I\Omega_{\P^n}}(l)\right)\to
{\Gamma}\left(\frac{\mathcal{O}_{\P^n}(-1)}{\I\mathcal{O}_{\P^n}(-1)}(l)\right)^{\oplus (n+1)}\to
{\Gamma}\left(\frac{\mathcal{O}_{\P^n}}{\I}(l)\right)\to 0
\label{gamma_tilde_om_mod_i}
\end{equation}
is exact for large $l$. For large $l$, the third term has dimension $P_C(l)$ as before.

We are left to compute $\text{dim}_k{\Gamma}(\P^n,(\mathcal{O}(-1)/\I\mathcal{O}(-1))(l))$ for large $l$.
Notice that $\I\mathcal{O}(-1)\simeq\I(-1)$ and that for large $l,$ \[{\Gamma}(\P^n,(\mathcal{O}(-1)/\I(-1))(l))=
{\Gamma}\left(\P^n,(\mathcal{O}_{\P^n}/\I)(l-1)\right)\] has dimension $P_C(l-1)$.

Going back through the exact sequences, we complete the calculation.
\end{proof}

For example, if $C\simeq\P^r$ is an $r$-dimensional projective linear subspace, we know the Hilbert polynomial $P_C(l)$, and we can easily determine
the dimensions $\dim_k\Gamma(\P^r,\Omega_{\P^r}(l))$ for large $l$, by using the Euler sequence (\ref{Euler}) for $\Omega_{\P^r}.$ This computes the Hilbert polynomial of $\mathcal{O}_{\P^n}/\I^{<2>}$, and hence the Hilbert polynomial of $\I^{<2>}$. This approach
generalizes to the case when $C$ is a disjoint
union of linear subspaces, hence we obtain a weak version of Lemma 2.3 in \cite{Linear_symb}.

As an easy special case, let $C=\{P_1,\dots,P_d\}$ be a finite set of points (with reduced induced structure). 
 Since $\Omega_C=0$, we have  
$\H\simeq i^*\Omega_{\P^n},$ and hence an exact sequence
\[0\to\Omega_{\P^n}/\I\Omega_{\P^n}\to \mathcal{O}_{\P^n}/\I^{<2>}\to\mathcal{O}_{\P^n}/\I\to 0.\]
However, $\Omega_{\P^n}/\I\Omega_{\P^n}=\Omega_{\P^n}\otimes_{\mathcal{O}_{\P^n}}(\mathcal{O}_{\P^n}/\I)$ has zero-dimensional support (the same is true for all of its
twists by $\mathcal{O}_{\P^n}(l)$), hence 
\[H^1((\Omega_{\P^n}/\I\Omega_{\P^n})(l))=0.\] Thus, for all $l,$ we have an exact sequence
\[0\to{\Gamma}\left(
(\Omega_{\P^n}/\I\Omega_{\P^n})(l)\right)\to
{\Gamma}\left((\mathcal{O}_{\P^n}/\I^{<2>})(l)\right)\to
{\Gamma}\left((\mathcal{O}_{\P^n}/\I)(l)\right)\to 0.\] This vanishing of $H^1(\P^n,(\Omega_{\P^n}/\I\Omega_{\P^n})(l))$ also
implies that the sequence (\ref{gamma_tilde_om_mod_i}) is exact for all $l$.
Therefore,
\[\dim_k\Gamma((\mathcal{O}_{\P^n}/\I^{<2>})(l))=(n+1)\dim_k\Gamma((\mathcal{O}_{\P^n}/\I)(l-1))=(n+1)d\]
for all $l$. See \cite{generator_symb} for a discussion of the more subtle question of the Hilbert function of the saturated ideal of
$\I^2$.

\section{The case when $C$ is a curve}
\label{section_curve}

\begin{lemma} Let 
$i\colon C\into\P^n$ be an integral curve. Notation as
in Section \ref{sec_introd}, we have
\[\dim_k\Gamma(C,\Omega_{C}(l))=dl+p_g-1+\mu(C)\quad\text{for}\ l\gg 0.\]
\label{EcharOmega}
\end{lemma}

\begin{proof}
For large $l$, the sequence
\begin{equation}
0\to\Gamma(C,\R_1(l))\to\Gamma(C,\Omega_C(l))\to\Gamma(C,(p_*\Omega_{\widetilde{C}})(l))\to\Gamma(C,\R_2(l))\to 0
\label{HpolyOme}
\end{equation}
is exact.

Note that
\[\Gamma(C,\R_1(l))\simeq\Gamma(C,\R_1)=\bigoplus_{P\in C_{\sing}} (\R_1)_P,\]
and similarly for $\R_2.$

Now, we look at the term $\Gamma(C, (p_*\Omega_{\widetilde{C}})(l)).$ By the projection formula, we know
\[(p_*\Omega_{\widetilde{C}})(l)\simeq p_*(\Omega_{\widetilde{C}}\otimes_{\mathcal{O}_{\widetilde{C}}}p^*\mathcal{O}_C(l)).\]
Since $C$ has degree $d$,
$p^*\mathcal{O}_C(l)$ is a line bundle on $\widetilde{C}$ of degree $dl$
(e.g.~Proposition 3.8 on p.~276 in \cite{L}).
By the Riemann-Roch theorem applied to $\widetilde{C},$ it follows that for large $l$,
\[\dim_k\Gamma(\widetilde{C},\Omega_{\widetilde{C}}\otimes p^*\mathcal{O}C(l))=dl+p_g-1.\]
Taking the alternating sum of dimensions in (\ref{HpolyOme}) completes the proof.
\end{proof}

\begin{proof}[Proof of Proposition \ref{compHpolyprop}.]
Combine Proposition \ref{Hilb_poly_O_mod_J}, Lemma \ref{EcharOmega}, and (\ref{Hilb_poly_curve}).
\end{proof}

Let $I=(f,x_{b+2},\dots,x_n)\subset S,$ where $f\in k[x_0,\dots,x_{b+1}]_d-\{0\}$ (for us, the important case will be $b=1$).
Consider the (surjective) composition
\[\Phi\colon  k[x_0,\dots,x_{b+1}]_l\oplus\left(\bigoplus_{i=b+2}^n k[x_0,\dots,x_{b+1}]_{l-1}x_i\right)
\into S_l
\onto S_l/(I^2\cap S_l).\]

\begin{lemma}
 We have that
\[\ker(\Phi)=\{P+\sum_{i=b+2}^n P_i x_i\ \colon\ f^2|P, f|P_i\ \text{for}\ i=b+2,\dots,n\}.\]
For $l\geq 2d$, the codimension of $I^2_l=I^2\cap S_l$ in $S_l$ equals $\beta_d(l)$, where
\begin{multline}
\beta_d(l)
=\binom{l+b+1}{b+1}-\binom{l-2d+b+1}{b+1}+\\
(n-b-1)\left(\binom{l+b}{b+1}-\binom{l-d+b}{b+1}\right).
\end{multline}
\label{explct}
\end{lemma}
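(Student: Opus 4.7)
The plan is to analyze $\Phi$ using the presentation of $S$ as a polynomial ring $R[x_{b+2},\ldots,x_n]$, where $R:=k[x_0,\ldots,x_{b+1}]$. Every element of $S_l$ then has a unique expression $\sum_\nu Q_\nu x^\nu$ with $Q_\nu\in R_{l-|\nu|}$ and $\nu$ running over multi-indices on $\{b+2,\ldots,n\}$ with $|\nu|\le l$. The source of $\Phi$ is exactly the subspace on which $Q_\nu=0$ for all $|\nu|\ge 2$.

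For surjectivity of $\Phi$, observe that $I^2$ contains every product $x_ix_j$ with $i,j\ge b+2$, so modulo $I^2$ one can kill all terms with $|\nu|\ge 2$. Thus every class in $S_l/(I^2\cap S_l)$ has a representative with only $|\nu|\le 1$, and this representative visibly lies in the image of $\Phi$.

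For the kernel, I would write an arbitrary element of $I^2\cap S_l$ as
\[\alpha=f^2 A+\sum_{i\ge b+2}fx_iB_i+\sum_{i,j\ge b+2}x_ix_jC_{ij}\]
and project onto the $|\nu|=0$ and $|\nu|=1$ graded pieces of the $R$-module decomposition above. The quadratic terms $x_ix_jC_{ij}$ have $\nu$-degree $\ge 2$ in $x_{b+2},\ldots,x_n$, so they never contribute to the low pieces. The $|\nu|=0$ component of $\alpha$ comes solely from the $R$-part of $f^2A$ and is therefore divisible by $f^2$; the $|\nu|=1$ component in direction $x_i$ picks up the $x_i$-linear part of $f^2A$ together with the $R$-part of $fx_iB_i$, so its coefficient of $x_i$ lies in $fR$. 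This proves the forward inclusion of the kernel description. The reverse inclusion is immediate, since $P\in (f^2)\subset I^2$ and $P_ix_i\in (fx_i)\subset I^2$ whenever the divisibility conditions hold.

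With the kernel identified, the codimension calculation is pure bookkeeping. For $l\ge 2d$, using $\dim_k R_m=\binom{m+b+1}{b+1}$, one gets
\[\dim_k\ker(\Phi)=\binom{l-2d+b+1}{b+1}+(n-b-1)\binom{l-d+b}{b+1},\]
while the source has dimension $\binom{l+b+1}{b+1}+(n-b-1)\binom{l+b}{b+1}$. Subtracting and invoking surjectivity of $\Phi$ produces the stated $\beta_d(l)$. The main subtlety lies in the kernel step, namely in confirming that the linear-in-$x_i$ constraint forces divisibility by $f$ rather than membership in some larger ideal; the $R$-module decomposition of $S$ makes the projection operators literal (extract the coefficient of $x^\nu$), so this reduces to a direct inspection of which generators of $I^2$ can contribute to each $\nu$-piece.
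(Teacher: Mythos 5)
Your proof is correct and follows essentially the same route as the paper: the paper's (very terse) argument is precisely to expand an element of $I^2$ as a polynomial in $x_{b+2},\ldots,x_n$ over $R=k[x_0,\ldots,x_{b+1}]$ and compare the coefficients of the $|\nu|\le 1$ pieces, which is exactly your multi-index projection, and the dimension count then proceeds as you describe. You have simply written out the details that the paper leaves to the reader.
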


\begin{proof} If $P+\sum P_i x_i\in I^2,$ just expand it as a polynomial in $x_{b+2},\dots,x_n$.
The second part is an immediate consequence.
\end{proof}

\begin{corollary}
For an integral {\it plane} curve $C$, we have $\mu(C)=p_a-p_g$.
\label{cor_mu_plane_curve}
\end{corollary}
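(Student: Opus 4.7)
The plan is to exploit the fact that a plane curve $C\subset\P^2\subset\P^n$ has an ideal sheaf of the very specific form covered by Lemma \ref{explct}, so that we obtain two independent asymptotic formulas for $\dim_k(S_l/(W_C)_l)$ whose comparison pins down $\mu(C)$.

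First I would observe that if $C\subset\P^2\subset\P^n$ is an integral plane curve cut out by $f\in k[x_0,x_1,x_2]_d$, then its homogeneous ideal is $I=(f,x_3,\ldots,x_n)\subset S$. This is a complete intersection, so in particular $C$ is a local complete intersection and hence $\J=\I^2$ by the remark following the definition of $\J$. Therefore $(W_C)_l=\Gamma(\P^n,\I^2(l))$, and Lemma \ref{explct} applies with $b=1$; this gives, for $l\gg 0$,
\[\dim_k(S_l/(W_C)_l)=\beta_d(l)=\binom{l+2}{2}-\binom{l-2d+2}{2}+(n-2)\left(\binom{l+1}{2}-\binom{l-d+1}{2}\right).\]

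Next I would expand $\beta_d(l)$ as a polynomial in $l$. A short computation gives
\[\beta_d(l)=ndl+\frac{-(n+2)d^2+(n+4)d}{2}.\]
On the other hand, Proposition \ref{compHpolyprop} gives, for $l\gg 0$,
\[\dim_k(S_l/(W_C)_l)=ndl+1+(n+1)(1-d-p_a)-\tilde{g}-\mu(C).\]
Equating the two constant terms and solving for $\mu(C)$ yields
\[\mu(C)=1+(n+1)(1-d-p_a)-\tilde{g}+\frac{(n+2)d^2-(n+4)d}{2}.\]

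Finally, using $p_a=(d-1)(d-2)/2$ for a plane curve, one computes $1-d-p_a=-d(d-1)/2$, and then a direct algebraic simplification reduces the right-hand side to $p_a-\tilde{g}$ (the $n$-dependence cancels, as it must). The only real work is the bookkeeping in these binomial-coefficient expansions; I do not anticipate any conceptual obstacle, since everything is arranged so that the two asymptotic formulas for the same quantity force the identification of $\mu(C)$.
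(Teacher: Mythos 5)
Your proposal is correct and takes essentially the same route as the paper: the paper's own (one-line) proof is precisely to compare Proposition \ref{compHpolyprop} with Lemma \ref{explct} via Proposition \ref{unrvlcnd}, using that a plane curve has homogeneous ideal $(f,x_3,\dots,x_n)$, hence is a local complete intersection with $\J=\I^2$. Your expansion of $\beta_d(l)$ with $b=1$ and the resulting cancellation of the $n$-dependence check out, so you have simply supplied the bookkeeping the paper leaves implicit.
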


\begin{proof}
We compute the codimension of $\Gamma(\P^n,\I^{<2>}(l))$ in $S_l$ for large $l$ in two different ways. On the one hand, it is given by the formula in Proposition
\ref{compHpolyprop}. On the other hand, since $C$ is a (local) complete intersection, we know that $\I^{<2>}=\I^2,$ and since the ideal $I^2$ is saturated,
we have, explicitly, $\Gamma(\P^n,\I^{<2>}(l))=I^2_l$. Thus, the codimension we are computing equals the codimension of $I^2_l$ in $S_l,$ which we computed as $\beta_d(l)$ in  Lemma \ref{explct} (take $b=1$). We equate the two linear polynomials in $l$ and compare their constant coefficients. Recall that $p_a=\frac{(d-1)(d-2)}{2}$ to obtain the desired conclusion.
\end{proof}

\begin{remark}
The conclusion of Corollary \ref{cor_mu_plane_curve} can fail for a general integral curve $C\subset\P^n$. For instance, if $C$ is the curve defined in Example
\ref{eg_intro}, then $\mu(C)=3$ by Lemma \ref{eg_mu} in Section \ref{eg_section}, while $p_a-p_g=2.$ 
\end{remark}

\begin{proof}[Proof of Corollary \ref{kahler_plane_curves}.]
Combine Lemma \ref{EcharOmega} and Corollary
\ref{cor_mu_plane_curve}.
\end{proof}

\section{An explicit example}
\label{eg_section}

Assume that $\text{char}(k)\neq 2,3,5.$
Notation as in Example \ref{eg_intro}, let 
$B=A/I$, and note that $\Omega_B=(Bdx\oplus Bdy\oplus Bdz)/(\eta_1,\eta_2,\eta_3),$ where
\begin{align*}
\eta_1 &=-zdx+2ydy-xdz,\\
 \eta_2 &=-3x^2dx+zdy+ydz, \\
 \eta_3&=2xydx+x^2dy-2zdz.
\end{align*}
Consider the map
 $\Psi:\Omega_B\to k[t]dt$ induced by $x\mapsto t^3, y\mapsto t^4, z\mapsto t^5$. 
The cokernel of $\Psi$ has dimension $2$ over $k$;
now we investigate $\ker(\Psi)$:

\begin{lemma} 
We have $\dim_k \ker(\Psi)=5.$
\label{eg_mu}
\end{lemma} 

\begin{proof}
First note that any element $b\in B$ can be written uniquely as 
\begin{equation}
b=\alpha(x)+\beta(x)y+\gamma(x)z,
\label{decomp_elt_b}
\end{equation}
where
$\alpha,\beta,\gamma\in k[x].$ Indeed, existence follows by using the relations in $B$, and for uniqueness, suppose that an element $\alpha(x)+\beta(x)y+\gamma(x)z$ of $A$ (with $\alpha,\beta,\gamma\in k[x]$) belongs to $I$. Since $I$ contains only monomials of degree $2$ or larger, it follows that $\alpha=x\alpha_1, \beta=x\beta_1, \gamma=x\gamma_1$. But then, since $I$ is prime, 
 $\alpha_1+\beta_1 y+\gamma_1 z$ must be in $I$. Continuing the process, we deduce that $\alpha,\beta,\gamma\in (x^n)$ for all $n$, hence $\alpha=\beta=\gamma=0.$ 
 
Consider a differential \[w=(P_1+Q_1 y+R_1 z)dx+(P_2+Q_2 y + R_2 z)dy+(P_3+Q_3 y+R_3 z)dz\] in $\Omega_B$,
where all $P_i,Q_i,R_i$ are in $k[x]$.
Split the image of $w$ under the composition 
$\Omega_B\xto{\Psi}k[t]dt\xto{\simeq}k[t]$ as a sum of three polynomials according to the residues modulo $3$ of the exponents of the monomials that they contain. 
We deduce that $w$ belongs to $\ker(\Psi)$ if and only if
\begin{gather*}
3P_1(x)+4x^2R_2(x)+5x^2Q_3(x)=0\\
3xQ_1(x)+4P_2(x)+5x^2R_3(x)=0\\
3xR_1(x)+4xQ_2(x)+5P_3(x)=0,\\
\end{gather*}
in $k[x]$, i.e.,
\begin{gather*}
P_1(x)=-\frac{1}{3}\left(4x^2R_2(x)+
5x^2Q_3(x)\right)\\
P_2(x)=-\frac{1}{4}\left(3xQ_1(x)+5x^2R_3(x)\right)\\
P_3(x)=-\frac{1}{5}\left(3xR_1(x)+4xQ_2(x)\right).\\
\end{gather*}
Going back to the expression of $w$, it follows that
$\Psi(w)=0$ if and only if $w$ is a $k[x]$-linear combination of the following $6$ differentials in $\Omega_B$:
\begin{gather*}
w_1:=5zdx-3xdz,
w_2:=-5x^2dx+3ydz,
w_3:=4ydx-3xdy,\\
w_4:= -4x^2dx+3zdy,
w_5:=-5x^2dy+4zdz, w_6:=5ydy-4xdz.
\end{gather*}
However, since $w_4=-w_2,$
$w_5=xw_3$ and $w_6=\frac{w_1}{2}$, it follows that $\ker(\Psi)$ is the $k[x]$-submodule of $\Omega_B$ generated by $w_1,w_2,w_3.$ Next, it is easy to see that $x^2w_1=0, xw_2=0, x^2w_3=0$ in $\Omega_B$. Therefore, the $5$ differentials $w_1, xw_1, w_2, w_3, xw_3$ span $\ker(\Psi)$ as a $k$-vector space. 
 
Suppose that $a_1,\dots,a_5\in k$ give a linear dependence relation among these $5$ differentials.  Working in $Bdx\oplus Bdy\oplus Bdz,$ this means that there exist $\alpha_i,\beta_i,\gamma_i\in k[x]$, for $i=1,2,3,$ such that
\[a_1w_1+a_2xw_1+a_3w_2+a_4w_3+a_5xw_3=\sum_{i=1}^3
(\alpha_i+\beta_i y+\gamma_i z)\eta_i\] 
(by abuse of notation, the obvious lift of $w_i$ is still denoted by $w_i$). 
Comparing the coefficients of $dy,dz,$ and then in turn using uniqueness of the decomposition (\ref{decomp_elt_b}), we obtain the following equalities in $k[x]$:
\begin{align}
-3(a_4+a_5 x)x &=2x^3\gamma_1+x^3\beta_2+x^2\alpha_3    \label{xdy}\\
0 &=2\alpha_1+x^2\gamma_2+x^2\beta_3 \label{ydy}\\
0 &=\alpha_2+2x\beta_1+x^2\gamma_3    \label{zdy}\\
-3(a_1+a_2 x)x &=-x\alpha_1 +x^3\gamma_2-2x^3\beta_3    \label{xdz}\\
3a_3 &=-x\beta_1+\alpha_2-2x^2\gamma_3    
\label{ydz}\\
0 &=-2\alpha_3-x\gamma_1+x\beta_2 \label{zdz}.
\end{align}

By (\ref{ydy}), we have $\alpha_1\in (x^2),$ hence
(\ref{xdz}) implies that $a_1=a_2=0.$ Next, by
(\ref{zdy}), we have $\alpha_2\in (x),$ so (\ref{ydz}) yields $a_3=0.$ Finally, (\ref{zdz}) implies that $\alpha_3\in (x),$ hence (\ref{xdy}) gives $a_4=a_5=0.$
\end{proof}

\section{Further study}

We can list a number of questions related to the current study. For example, given a closed subscheme
$C\into\P^n$ (not necessarily reduced), it would be interesting to describe the space of 
all hypersurfaces $F\in S_l$ for which there is a scheme-theoretic inclusion
\[C\into V\left(F,\frac{\partial F}{\partial x_0},\dots,\frac{\partial F}{\partial x_n}\right):=\text{Proj}\left(S/(F,\frac{\partial F}{\partial x_i})\right).\]

Also, given an integral curve $C\subset\P^n,$ it would be interesting to investigate the Hilbert polynomials of higher symbolic powers of $\I$.
For example, if we mimic the discussion in Section \ref{undcondsubsection}, we find that the third symbolic power $\I^{<3>}$ fits into an exact sequence
\[0\to\I^{<3>}\to\I^{<2>}\to\Omega_{\P^n}/\I^{<2>}\Omega_{\P^n}\to j_*\Omega_D\to 0,\]
where $j:D\into\P^n$ is the closed subscheme whose ideal sheaf is $\I^{<2>}$ (of course, $C\simeq D_{\text{red}}$). 
However, the Hilbert polynomial $\chi(\Omega_D(l))$ is more difficult to analyze. 

Finally, when $C\subset\P^n$ is a curve, the question of the Hilbert {\it function} of $I^{<2>}$ (the saturated ideal of $\I^2$)
 appears naturally. As suggested, for example,
in  \cite{CM} and \cite{generator_symb}, this question will be interesting and non-trivial, since it is such already when $C$ is a finite set of points.

\section*{Acknowledgments}

The problem that we address in this article came up naturally as I was working on my doctoral thesis at MIT under the direction of Bjorn Poonen. I am grateful
to Prof. Poonen for all of his dedication and substantial help throughout the entire process. I also thank an anonymous referee 
for some helpful comments and suggestions. I thank Steven Kleiman for a number of references and thorough comments. Finally, I thank Martin Kreuzer for some additional references.

\end{document}